\def\serieslogo@{} 
\def\@setcopyright{} 
\title[An upper bound for the finitistic dimension of an EI category algebra]{An upper bound for the finitistic dimension of an EI category algebra}
\author{Karsten Dietrich}
\address{Karsten Dietrich\\ Institut f\"ur Mathematik\\
Universit\"at Paderborn\\ D-33095 Paderborn\\ Germany.}
\email{karsten.dietrich@math.upb.de}
\subjclass[2000]{16E10, 16G10}
\newtheorem{lem}{Lemma}[section]
\newtheorem{prop}[lem]{Proposition}
\newtheorem{thm}[lem]{Theorem}
\newtheorem*{MThm}{Main~Theorem}
\newtheorem{Cor}[lem]{Corollary}
\theoremstyle{definition}
\newtheorem{exm}[lem]{Example}
\newtheorem{Rem}[lem]{Remark}
\newtheorem{defn}[lem]{Definition}
\numberwithin{equation}{section}
\renewcommand{\mod}{\operatorname{mod}\nolimits}
\newcommand{\Aut}{\operatorname{Aut}\nolimits}
\newcommand{\findim}{\operatorname{fin.dim}\nolimits}
\newcommand{\Ob}{\operatorname{Ob}\nolimits}
\newcommand{\pd}{\operatorname{proj.dim}\nolimits}
\newcommand{\Mod}{\operatorname{Mod}\nolimits}
\newcommand{\Findim}{\operatorname{Fin.dim}\nolimits}
\renewcommand{\mod}{\operatorname{mod}\nolimits}
\newcommand{\gldim}{\operatorname{gl.dim}\nolimits}
\def\C{{\mathcal C}}
\def\D{{\mathcal D}}
\def\P{{\mathcal P}}
\begin{document}

\begin{abstract}
EI categories can be thought of as amalgams of finite posets and finite groups and therefore the associated algebras are built up from incidence algebras and group algebras of finite groups. For this particular class of algebras we present a construction of an upper bound for the finitistic dimension which is due to L\"uck.
\end{abstract}

\maketitle

\section{Introduction}
The finitistic dimensions of a ring $\Lambda$ provide a measure for the complexity of the module category of $\Lambda$. They are defined as
\begin{eqnarray*}
	\findim (\Lambda) &=& \sup \Set{ \pd M | M \in \mod (\Lambda), \pd M < \infty } \\
    \Findim (\Lambda) &=& \sup \Set{ \pd M | M \in \Mod (\Lambda), \pd M < \infty }. 
	\end{eqnarray*}
	There are at least two canonical questions that arise in studying these invariants, namely: Are these two dimensions finite for any ring $\Lambda$ and do they coincide? For noetherian rings both questions have to be answered in the negative, but for finite dimensional algebras there is no counterexample up to now. In 1960 Bass published the two questions for finite dimensional algebras as ``problems''   and they are nowadays known as the finitistic dimension conjectures. 
	 
The little finitistic dimension, $\findim$,  is known to be finite for certain classes of algebras, for example for algebras with representation dimension at most $3$, monomial algebras or algebras with radical cube zero. One may consult \cite{Birge} for a survey on this conjecture and other homological conjectures (not including the result of Igusa and Todorov from \cite{IT} concerning the relation of the representation dimension and the finitistic dimension). 

We will provide an upper bound for the finitistic dimensions of EI category algebras which form an interesting class of finite dimensional algebras. Their representation theory has recently been studied by Webb \cite{Webb} and Xu \cite{Fei1}. Originally these algebras arose in algebraic topology in the 1970s and certain properties of their module categories have been studied by L\"uck and tom Dieck \cite{Lueck}, for instance, they described the indecomposable projective and the simple modules for EI category algebras. Their finitistic dimension (or the projective dimension of some modules over EI category algebras) plays a prominent role for the computation of certain invariants in the theory of finite $G$-spaces in algebraic topology; see \cite{Grodal}.
Throughout fix a field $k$ of characteristic $p \geq 0 $ (not necessarily algebraically closed). The main result is the following.
\begin{MThm}[L\"uck, {\cite[Proposition 17.31]{Lueck}} ]
Let $\C$ be an EI category and $k\C$ its associated unital $k$-algebra. Then the global dimension of $k\C$ is finite if and only if $|\Aut(x)|$ is invertible in $k$ for any $x \in \Ob\C$ and \[\Findim(k\C) \leq \ell(\C), \] where $\ell(\C)$ is the maximal length of a chain of non-isomorphisms in $\C$. 
\end{MThm}
It seems that this result is not well-known even among specialists. I would like to thank Jesper Grodal for his comments on a previous version of this manuscript, where I claimed that the above result is new.

\section{Basic facts about EI categories and their representations}
\begin{defn}
A category $\C$ in which every endomorphism is an isomorphism is called \emph{EI category}. We will only consider finite EI categories, i.e. those with only finitely many morphisms, to which we associate a finite dimensional, unital $k$-algebra $k\C$ having as $k$-basis the set of morphisms in $\C$ with the natural summation and multiplication given by the composition of morphisms if possible and zero otherwise.
\end{defn}
In order to avoid technicalities we will always assume that our EI categories are connected. All the results and proofs also work for the general case, but one has to restrict to the connected components at certain stages.
\begin{exm}
\begin{itemize}
\item[(1)] Let $G$ be a finite group and let $\underline{G}$ be the category with one object $x$ and $\text{End}(x)= G$. Then $\underline{G}$ is an EI category and $k\underline{G} = kG$.
\item[(2)] Let $Q$ be a finite quiver without oriented cycles and $\underline{Q}$ its path category. Then $\underline{Q}$ is an EI category (since it only has the identities as endomorphisms) with $k\underline{Q} = kQ$.
\item[(3)] If $(X,\leq)$ is a finite partially ordered set, then the associated category $\underline{X}$ is an EI category (for the same reason as above) and its algebra is the incidence algebra of the poset $(X,\leq)$.
\end{itemize}
\end{exm}
\begin{Rem}
If $\C$ is an EI category, one has a natural preorder defined on the set of objects $\Ob\C$, given by
\[ x \leq y \Leftrightarrow \C(x,y) \neq 0. \]
This preorder clearly induces a partial order on the set of isomorphism classes of objects of $\C$.
\end{Rem}
By $\Mod k\C$ we mean the category of left $k\C$-modules which can be identified with the category $\text{Fun}(\C,\Mod k)$ of all covariant functors from $\C$ into the category of vector spaces over $k$. The restriction of this identification yields an equivalence of categories $\mod k\C \simeq \text{Fun}(\C,\mod k\C)$ and is analogous to the equivalence of the module category of the path algebra of a given quiver and the category of representations of this quiver or its analogue for finite groups. We will switch between the concepts of modules on the one hand and functors on the other hand frequently in order to simplify the arguments.

There are classical theorems due to L\"uck that describe the structure of the simple and the indecomposable projective modules over an EI category algebra, which we want to mention here. One should be aware that also Auslander studied functor categories in his work on the representation theory of Artin algebras \cite{Auslander} and already knew the projective and simple objects of these categories, which in the special case of an EI category have been classified by L\"uck in a very concrete and convenient way.
\begin{prop}[L\"uck, \cite{Lueck}]
Let $\C$ be an EI category. Then any projective $k\C$-module is isomorphic to a direct sum of indecomposable projectives of the form $k\C \cdot e$  with $e \in k\text{Aut}(x)$ being a primitive idempotent for some $x \in \Ob \C$.
\end{prop}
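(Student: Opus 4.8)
The plan is to deduce the statement from two ingredients: the general structure theory of projective modules over a semiperfect ring, and an explicit description of a complete set of primitive orthogonal idempotents of $k\C$ adapted to the objects of $\C$. Since $k\C$ is a finite dimensional $k$-algebra it is semiperfect, so every projective $k\C$-module is a direct sum of indecomposable projective modules, each of which is isomorphic to $k\C\cdot e$ for some primitive idempotent $e$ of $k\C$; it therefore suffices to produce a complete set of primitive orthogonal idempotents each of which lies in a subalgebra of the form $k\Aut(x)$.

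First I would note that $1=\sum_{x\in\Ob\C}\id_x$ is a decomposition of the identity of $k\C$ into orthogonal idempotents, orthogonality following from $\id_x\comp\id_y=0$ for $x\neq y$. For a fixed object $x$ the corner algebra $\id_x(k\C)\id_x$ has as a $k$-basis the morphisms $f$ with $\id_x\comp f\comp\id_x=f$, that is, the endomorphisms of $x$; as $\C$ is an EI category these are precisely the automorphisms of $x$, so $\id_x(k\C)\id_x=k\Aut(x)$. Decomposing $\id_x$, the identity of the finite dimensional algebra $k\Aut(x)$, into a finite sum $\id_x=\sum_i e_{x,i}$ of primitive orthogonal idempotents of $k\Aut(x)$, one obtains a family $\{e_{x,i}\}_{x,i}$ which is orthogonal overall: within a fixed $x$ by construction, and across distinct objects because $e_{x,i}=\id_x e_{x,i}\id_x$ and $\id_x\id_y=0$ for $x\neq y$.

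Next I would check that each $e_{x,i}$ remains primitive as an idempotent of $k\C$: indeed $e_{x,i}(k\C)e_{x,i}=e_{x,i}\,\id_x(k\C)\id_x\,e_{x,i}=e_{x,i}(k\Aut(x))e_{x,i}$, and the latter ring has no idempotents besides $0$ and $e_{x,i}$ because $e_{x,i}$ is primitive in $k\Aut(x)$. Hence $1=\sum_{x,i}e_{x,i}$ is a complete set of primitive orthogonal idempotents of $k\C$, so $k\C=\bigoplus_{x,i}k\C\cdot e_{x,i}$ expresses $k\C$, viewed as a left module over itself, as a direct sum of indecomposable projectives --- indecomposability of $k\C\cdot e_{x,i}$ being equivalent to the locality of $\End_{k\C}(k\C\cdot e_{x,i})\cong e_{x,i}(k\C)e_{x,i}$, which holds since a corner ring at a primitive idempotent of an Artinian ring is local. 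Since over a semiperfect ring every indecomposable projective module is isomorphic to $Re$ for some $e$ in a fixed complete set of primitive orthogonal idempotents, every indecomposable projective $k\C$-module is isomorphic to one of the $k\C\cdot e_{x,i}$, and the statement follows by decomposing an arbitrary projective into indecomposables.

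The step I expect to be the real obstacle is the passage to arbitrary, not necessarily finitely generated, projective modules: for finitely generated projectives the Krull--Schmidt theorem in $\mod k\C$ settles everything at once, but in general one must appeal to the structure theory of projectives over semiperfect rings (existence of projective covers, lifting of idempotents modulo the radical). The remaining points --- that $\id_x(k\C)\id_x=k\Aut(x)$, and that primitivity of an idempotent is detected in its corner ring and hence transfers between $k\Aut(x)$ and $k\C$ --- are routine verifications.
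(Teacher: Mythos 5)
Your proof is correct, and there is in fact nothing in the paper to compare it with: the proposition is stated without proof, quoted directly from L\"uck's book. Your route --- decompose $1=\sum_{x\in\Ob\C}\id_x$ into orthogonal idempotents, identify the corner algebra $\id_x(k\C)\id_x$ with $k\Aut(x)$ via the EI condition, refine each $\id_x$ into primitive orthogonal idempotents of $k\Aut(x)$, note that primitivity is detected by the corner ring and therefore persists in $k\C$, and then invoke the structure theory of projectives over semiperfect rings --- is a clean, purely ring-theoretic argument, and every individual step checks out. L\"uck's own treatment is functorial rather than idempotent-theoretic: the indecomposable projectives are produced by inducing projective $k\Aut(x)$-modules up from the automorphism groups, i.e. as $k\C\otimes_{k\Aut(x)}k\Aut(x)e\cong k\C\cdot e$, and the completeness of this list is checked with restriction and splitting constructions at each object; your corner-ring computation packages the same information more economically and stays entirely inside elementary ring theory. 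You are also right to single out the one non-formal ingredient: the proposition concerns arbitrary projectives (which matters, since the Main Theorem bounds $\Findim$, i.e. involves big modules), so one needs that every projective, not just every finitely generated one, is a direct sum of principal indecomposables. Since $k\C$ is a finite dimensional algebra it is even left perfect, so this is covered by the standard decomposition theorem for projectives over semiperfect (or perfect) rings, exactly as you indicate.
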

For an object $x$ in $\C$ we denote by $[x]$ the isomorphism class of $x$. With this notation there is the following theorem.
\begin{thm}[L\"uck, {\cite[Theorem 9.39, Corollary 9.40]{Lueck}}]
Let $\C$ be an EI category. For each object $x \in \Ob\C$ and each simple $k\Aut(x)$-module $V$, there is a simple $k\C$-module $M$ such that $[x]$ is exactly the set of objects on which $M$ is non-zero and $M(x) = V$. Conversely, if $M$ is a simple $k\C$-module, then there is a unique isomorphism class of objects $[x]$ on which $M$ is non-zero and each $M(x)$ is a simple $k\Aut(x)$-module. This yields a bijection between the isomorphism classes of simple $k\C$-modules and the pairs $([x],V)$ where $x$ is an object in $\C$ and $V$ a simple $k\Aut(x)$-module.
\end{thm}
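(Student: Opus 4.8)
The plan is to work throughout in the functor picture $\Mod k\C \simeq \mathrm{Fun}(\C,\Mod k)$ and to exploit the partial order induced on isomorphism classes of objects by the preorder $x \le y \Leftrightarrow \C(x,y) \neq 0$. Two elementary facts about subfunctors will do most of the work. First, for an object $x$ and a subspace $W \subseteq M(x)$, the subfunctor of $M$ generated by $W$ is $N(y) = \sum_{f \in \C(x,y)} M(f)(W)$; it is supported on $\{y : x \le y\}$, and if $W$ is stable under $\Aut(x) = \C(x,x)$ (in particular if $W = M(x)$) then $N(x) = W$, because every endomorphism of $x$ is an automorphism. Second, if $U \subseteq \Ob\C$ is upward closed for the preorder, then the rule sending $y$ to $M(y)$ for $y \in U$ and to $0$ otherwise defines a subfunctor. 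I also record the EI-consequence that any morphism between isomorphic objects is invertible, so that $M(f)$ is an isomorphism whenever $f$ is.

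For the converse direction I would let $M$ be simple and pick $x$ with $M(x) \neq 0$ whose class $[x]$ is minimal for the induced order among classes meeting the support (this exists since $\C$ is finite). Applying the first fact with $W = M(x)$ produces a nonzero subfunctor generated by $M(x)$, which must be all of $M$; hence $M(y) = \sum_{f \in \C(x,y)} M(f)(M(x))$ and in particular the support lies in $\{y : x \le y\}$. Next, the set $U = \{y : x \le y,\ y \not\cong x\}$ is upward closed — if $x \le y \le z$ with $z \cong x$ then $y \cong x$ by antisymmetry — so the second fact yields a subfunctor vanishing at $x$; by simplicity it is zero, whence $M$ is supported exactly on $[x]$ (it is nonzero on all of $[x]$ since $M(f)$ is invertible for isomorphisms $f$). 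Finally, any $\Aut(x)$-submodule $0 \neq W \subsetneq M(x)$ would generate, via the first fact, a proper nonzero subfunctor, contradicting simplicity; so $M(x)$ is a simple $\Aut(x)$-module, and the uniqueness of $[x]$ is clear since it is the support.

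For existence, given $x$ and a simple $k\Aut(x)$-module $V$, I would first build the functor on the full subcategory $\C_{[x]}$ on the objects isomorphic to $x$. By the EI property this subcategory is a connected groupoid, so $\mathrm{Fun}(\C_{[x]},\Mod k) \simeq \Mod k\Aut(x)$; let $S$ on $\C_{[x]}$ correspond to $V$ and extend $S$ by zero to all of $\C$, sending every morphism with source or target outside $[x]$ to the zero map. Functoriality is automatic once one checks that no object outside $[x]$ can lie between two objects of $[x]$ in the order, which again follows from antisymmetry. Simplicity of $S$ is then the same submodule computation as above transported through the equivalence: a nonzero subfunctor is nonzero at $x$, hence equals $V$ there by simplicity of $V$, hence equals $S$ everywhere by invertibility of the structure maps on $\C_{[x]}$.

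It remains to assemble the bijection. The assignment $M \mapsto ([x],M(x))$ of the converse direction and the construction $([x],V) \mapsto S_{x,V}$ are mutually inverse: the functor $S_{x,V}$ has support $[x]$ and value $V$ at $x$, while a simple $M$ is isomorphic to $S_{x,M(x)}$ because both restrict to the $k\Aut(x)$-module $M(x)$ on the groupoid $\C_{[x]}$ and vanish elsewhere, and the equivalence $\mathrm{Fun}(\C_{[x]},\Mod k)\simeq \Mod k\Aut(x)$ turns an isomorphism of $\Aut(x)$-modules into an isomorphism of functors that extends by zero. I expect the main obstacle to be the support argument in the converse direction: the two subfunctor constructions are easy individually, but pinning down that the support is a single isomorphism class — not merely an up-set — is exactly the point where the EI hypothesis is indispensable, both through ``endomorphisms are automorphisms'' (which gives $N(x)=W$ and the groupoid structure on $\C_{[x]}$) and through the antisymmetry of the induced order that it forces.
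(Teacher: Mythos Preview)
The paper does not actually prove this theorem: it is quoted from L\"uck's monograph (Theorem~9.39 and Corollary~9.40 there) and used as background, so there is no in-paper argument to compare against. Your proposal is a correct and self-contained proof. The two subfunctor constructions you isolate---the subfunctor generated by an $\Aut(x)$-stable subspace at a fixed object, and the subfunctor obtained by cutting to an upward-closed set of objects---are exactly the right tools, and your use of the EI hypothesis is clean: it enters once to ensure $N(x)=W$ (endomorphisms are automorphisms), once to give antisymmetry on isomorphism classes (so that $U=\{y:x\le y,\ y\not\cong x\}$ is upward closed and no object outside $[x]$ can sit between two objects of $[x]$), and once to make $\C_{[x]}$ a connected groupoid. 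One small point worth making explicit in the simplicity check for $S_{x,V}$: from a nonzero subfunctor $N$ you first deduce $N(x)\neq 0$ by transporting along an isomorphism $f\colon x\to y$ in $\C_{[x]}$ and using that $S(f)$ restricts to a bijection $N(x)\to N(y)$; this is implicit in what you wrote but is the step a reader might stumble over.
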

In view of this theorem, it is natural to denote a simple $k\C$-module by $S_{x,V}$ if it corresponds to the pair $([x],V)$ and to write $P_{x,V}$ for its projective cover. Note that the structure of $P_{x,V}$ is determined by its value at $x$.\\
Another important feature coming along with an EI category is the restriction functor to the category of representations of a subcategory of the EI category. Namely, let $\D $ be a subcategory of $\C$, then one has a functor $\downarrow^{\C}_{\D}:\mod k\C \to \mod k\D$  given by restriction of representations. This functor has a left adjoint which is usually denoted by $\uparrow^{\C}_{\D}= k\C \otimes_{k\D} -$. In \cite{Fei1}, Xu developed a theory of vertices and sources with respect to this restriction and induction which is analogous to the one for representations of finite groups. One should note that the restriction to a subgroup does not occur as a special case here, since a subgroup is not represented by a full subcategory in our setting.

\section{Construction of the upper bound}
Our aim is to give a complete and self-contained proof for the existence of an upper bound for the finitistic dimension of an EI category algebra $k\C$. Therefore we are going to recall some concepts and results concerning projective resolutions of modules over EI category algebras, most of them being due to  Xu. For the convenience of the reader we will also give proofs for these results (sometimes just special cases of the original ones with easier proofs). We would like to emphasize that  all this already occurs implicitly in \cite{Lueck}, but from a representation-theoretic point of view it is more convenient to use the notation of Xu. 
\begin{defn} Let $\C$ be an EI category.
\begin{itemize}
\item[(1)] Let $x$ be an object in $\C$. Then we define $\C_{\leq x}$ to be the full subcategory of $\C$ consisting of all objects $y \in \Ob\C$ with $\C(y,x) \neq \emptyset$. Similarly we define $\C_{\geq x}$.
\item[(2)] An ideal in $\C$ is a full subcategory $\D$ of $\C$ such that for any object $x$ in $\D$ we have that $\C_{\leq x} \subseteq \D$. 
\item[(3)] Let $M$ be a $k\C$-module. The $M$-minimal objects are the objects $x \in \Ob\C$ such that $M(x) \neq 0$ and for any $y \in \Ob\C$ with $y \not\simeq x$ and $\C(y,x)\neq \emptyset$ one has $M(y) = 0$.
\item[(4)] Let $M$ be again a $k\C$-module. We put $\C_{M}$ to be the full subcategory consisting of all $y \in \Ob\C$ with $\C(x,y) \neq \emptyset$ for some $M$-minimal object $x$ in $\C$.\\
\end{itemize}
\end{defn}
It is clear by definition that any $k\C$-module $M$ is determined by its values on $\C_M$. We are now going to point out a nice property of ideals, namely that in this case the restriction preserves projectives.
\begin{prop}[Xu, {\cite[Lemma 3.1.6]{Fei1}}]\label{res}
Let $\D$ be an ideal in $\C$. Then the restriction functor $\downarrow^{\C}_{\D}$ preserves projective (left-)modules.
\end{prop}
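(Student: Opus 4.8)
The plan is to reduce the statement to the well-known fact that induction $\uparrow^{\C}_{\D}$ is left adjoint to restriction $\downarrow^{\C}_{\D}$, and that being a left adjoint, $\uparrow^{\C}_{\D}$ preserves projectives; the content specific to ideals is that for an ideal $\D$, restriction and induction are suitably inverse to each other on the relevant projectives. Concretely, by Proposition~2 (L\"uck's description of projectives) it suffices to treat an indecomposable projective $P = k\C \cdot e$, where $e \in k\Aut(x)$ is a primitive idempotent for some object $x \in \Ob\C$. So first I would fix such a $P$ and analyze $P \downarrow^{\C}_{\D}$ as a functor on $\D$.

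The key computation is the description of $(k\C\cdot e)(y) = e_y (k\C) e$ for an object $y$, where $e_y$ denotes the identity morphism at $y$ viewed as an idempotent of $k\C$; this is the $k$-span of the morphisms $x \to y$ composed with $e$. Now I distinguish two cases according to whether $x$ lies in $\D$. If $x \notin \Ob\D$, then for every $y \in \Ob\D$ we have $\C(x,y) = \emptyset$: indeed, if there were a morphism $x \to y$, then $x \in \C_{\leq y} \subseteq \D$ since $\D$ is an ideal, a contradiction. Hence $(k\C \cdot e)(y) = 0$ for all $y \in \Ob\D$, so $P \downarrow^{\C}_{\D} = 0$, which is projective. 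If instead $x \in \Ob\D$, then I claim the canonical map $k\D \cdot e \to (k\C \cdot e)\downarrow^{\C}_{\D}$ is an isomorphism of $k\D$-modules: on an object $y \in \Ob\D$ it is the inclusion of the span of morphisms $x \to y$ \emph{in $\D$} into the span of morphisms $x \to y$ \emph{in $\C$}, and these coincide because $\D$ is a full subcategory and, again by the ideal property, any morphism $x\to y$ in $\C$ already lies in $\D$ (as $x \in \C_{\le y}\subseteq \D$ and $\D$ is full). Since $e$ is a primitive idempotent in $k\Aut(x) \subseteq k\D$, the module $k\D\cdot e$ is an (indecomposable) projective $k\D$-module by Proposition~2 applied to $\D$. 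Therefore $P\downarrow^{\C}_{\D}$ is projective in either case.

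Finally I would assemble the general case: an arbitrary projective $k\C$-module $Q$ is, by Proposition~2, a direct sum of modules of the form $k\C\cdot e$ as above; since restriction is an additive (in fact exact) functor, $Q\downarrow^{\C}_{\D}$ is the corresponding direct sum of the modules $(k\C\cdot e)\downarrow^{\C}_{\D}$, each of which was just shown to be projective, and a direct sum of projectives is projective. I expect the only real subtlety to be the bookkeeping in identifying $(k\C\cdot e)\downarrow^{\C}_{\D}$ with $k\D\cdot e$ --- one must use precisely that $\D$ is \emph{full} (so no morphisms are lost) together with the \emph{ideal} condition $\C_{\le x}\subseteq\D$ (so that the source object $x$, and hence every morphism out of it landing in $\D$, is actually present in $\D$); without the ideal hypothesis the restricted module need not be projective, and the argument should make clear exactly where that hypothesis is used.
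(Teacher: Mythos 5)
Your argument is correct, but it takes a genuinely different route from the paper. The paper never touches the classification of projectives at this point: it constructs the ``extension by zero'' functor $F:\Mod k\D \to \Mod k\C$ (put $FM(y)=M(y)$ for $y\in\Ob\D$ and $0$ otherwise), observes that the ideal condition makes $F$ a well-defined exact \emph{right} adjoint of $\downarrow^{\C}_{\D}$, and concludes that restriction, being a left adjoint of an exact functor, preserves projectives. You instead invoke L\"uck's description of projectives and compute directly that $(k\C\cdot e)\downarrow^{\C}_{\D}$ is $0$ when $x\notin\Ob\D$ (using the ideal condition, correctly, to rule out morphisms $x\to y$ with $y\in\Ob\D$) and is canonically $k\D\cdot e$ when $x\in\Ob\D$ (where in fact fullness alone suffices; the extra appeal to the ideal property there is redundant), and then sum up using additivity of restriction and the decomposition of an arbitrary projective. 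Both proofs are valid; yours yields the more explicit statement that restriction sends indecomposable projectives to indecomposable projectives or zero, which is useful information in its own right, while the paper's adjunction argument is shorter, avoids decomposing projectives into indecomposables, and applies verbatim to arbitrary (not necessarily finitely generated) modules. One small caveat: your opening sentence, promising a reduction to the adjunction $\uparrow^{\C}_{\D}\dashv\,\downarrow^{\C}_{\D}$ and the fact that induction preserves projectives, is a misdirection --- that fact concerns $\uparrow^{\C}_{\D}$, not $\downarrow^{\C}_{\D}$, and your actual computation never uses it; the relevant adjunction, if you want one, is the one with extension by zero on the right, as in the paper.
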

\begin{proof}
We define a functor $F:\Mod k\D \to \Mod k\C$ by $FM(x) = M(x)$ for $x \in \Ob \D$ and $FM(x) = 0$ for $x \not\in \Ob \D$. On morphisms $F$ is also just given by filling up with zeros. This clearly defines a functor which is by definition exact. Since $\D$ is supposed to be an ideal we get that this functor $F$ is a right adjoint to the restriction functor. Therefore the restriction has to preserve projective modules.
\end{proof}
With this result we obtain important properties for projective resolutions of $k\C$-modules, which are again due to Xu.

\begin{lem}[Xu, {\cite[Lemma 4.1.1]{Fei1}}]
Let $M$ be a $k\C$-module and $P_M$ its projective cover. Then $P_M$ is supported on $\C_M$ and for any $M$-minimal object $x$ the module $P(x)$ is the projective cover of $M(x)$.
\end{lem}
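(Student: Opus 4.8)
The plan is to reduce everything to L\"uck's description of the indecomposable projective $k\C$-modules. Since $k\C$ is finite dimensional, the projective cover $P_M\to M$ is the projective cover of $\operatorname{top}(M)=M/\rad M$; writing $\operatorname{top}(M)\cong\bigoplus_i S_{x_i,V_i}$ as a finite direct sum of simple modules (with multiplicities), we obtain $P_M\cong\bigoplus_i P_{x_i,V_i}$, where each $P_{x_i,V_i}=k\C e_i$ is indecomposable projective by the theorems of L\"uck recalled above. In particular the only objects that enter this decomposition are objects $x_i$ for which $S_{x_i,V_i}$ is a direct summand of $\operatorname{top}(M)$, and for every such $x_i$ we have $M(x_i)\neq 0$. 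Thus both assertions become statements about the functors $P_{x_i,V_i}$ and their values.

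For the support statement, I would first note that every object $z$ with $M(z)\neq 0$ lies above an $M$-minimal object. Indeed, the set $\Set{w\in\Ob\C | M(w)\neq 0,\ \C(w,z)\neq\emptyset}$ is finite and non-empty (it contains $z$), hence has a minimal element $x$ for the preorder on $\Ob\C$; using that a composite of non-empty hom-sets is non-empty one checks that such an $x$ is automatically $M$-minimal, while $\C(x,z)\neq\emptyset$ by construction. Applying this to each of the finitely many objects $x_i$ and composing morphisms: if $P_M(y)\neq 0$ then $P_{x_i,V_i}(y)\neq 0$ for some $i$, so $\C(x_i,y)\neq\emptyset$, and choosing an $M$-minimal $x$ with $\C(x,x_i)\neq\emptyset$ we get $\C(x,y)\neq\emptyset$, i.e. $y\in\Ob\C_M$. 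Hence $P_M$ is supported on $\C_M$.

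For the value at an $M$-minimal object $x$, the heart of the matter is the elementary identity, valid for any $k\C$-module $N$,
\[(\rad N)(x)=\rad(k\Aut(x))\cdot N(x)+\sum_{z}k\C(z,x)\cdot N(z),\]
where the sum runs over the objects $z$ with $\C(z,x)\neq\emptyset$ and $z\not\simeq x$, and $k\C(z,x)\cdot N(z)$ denotes the image of the structure maps in $N(x)$. When $x$ is $M$-minimal, all such $z$ satisfy $M(z)=0$, so $(\rad M)(x)=\rad(k\Aut(x))\cdot M(x)$ and the evaluation $M(x)\to\operatorname{top}(M)(x)$ is the canonical surjection onto the top of $M(x)$ as a $k\Aut(x)$-module. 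Feeding the same identity to $N=P_{x_i,V_i}$ and invoking the antisymmetry of the induced partial order on isomorphism classes ($\C(z,x)\neq\emptyset$ together with $\C(x,z)\neq\emptyset$ forces $z\simeq x$), one sees that the summands $P_{x_i,V_i}$ with $x_i\not\simeq x$ vanish at $x$, and that for the surviving ones $P_{x_i,V_i}(x)$ is a projective $k\Aut(x)$-module with simple top the value $S_{x_i,V_i}(x)\cong V_i$. Therefore $P_M(x)=\bigoplus_{x_i\simeq x}P_{x_i,V_i}(x)$ is a projective $k\Aut(x)$-module whose top is $\bigoplus_{x_i\simeq x}V_i\cong\operatorname{top}(M)(x)$, and the evaluated map $P_M(x)\to M(x)$ is a surjection from a projective module inducing an isomorphism on tops, hence a projective cover.

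The step I expect to be the main obstacle is precisely this radical bookkeeping: one must keep straight the difference between the radical of a $k\C$-module evaluated at $x$ and the radical of its value $N(x)$ as a $k\Aut(x)$-module, and verify that for an $M$-minimal $x$ every ``lower'' and every ``cross-summand'' contribution to $(\rad P_M)(x)$ drops out. This is where the EI hypothesis really enters, through the antisymmetry of the order on isomorphism classes and the composability of hom-sets; once it is in hand the remaining arguments are formal. As an alternative that shortens the computation, one can observe that $\C_{\le x}$ is an ideal and first restrict along Proposition~\ref{res} --- this restriction is exact and, one checks, commutes with radicals, hence carries projective covers to projective covers --- thereby reducing to the case in which $[x]$ is a maximal isomorphism class on which $M$ is concentrated.
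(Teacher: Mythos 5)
Your proof is correct, but it takes a genuinely different route from the paper's for the crucial part of the statement. For the support claim you and the paper do essentially the same thing (the paper merely asserts it as ``clear'' from L\"uck's description of the indecomposable projectives and the definition of $\C_M$; you fill in the detail that every object in the support of $M$ lies above an $M$-minimal object). For the claim that $P_M(x)$ is the projective cover of $M(x)$, however, the paper argues via the ideal $\C_{\leq x}$: restriction to an ideal preserves projectives (Proposition~\ref{res}), so $P_M(x)$ is a projective $k\Aut(x)$-module surjecting onto $M(x)$, and minimality is then obtained by contradiction, splitting off an indecomposable projective summand $k\C\cdot e$ that the essential epimorphism $\pi$ would have to annihilate. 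You instead decompose $P_M\cong\bigoplus_i P_{x_i,V_i}$ according to $\operatorname{top}(M)$ and compute directly with the radical, using the identity $(\rad N)(x)=\rad(k\Aut(x))\cdot N(x)+\sum_{z\not\simeq x}k\C(z,x)\cdot N(z)$ to identify $\operatorname{top}(M)(x)$ with the top of $M(x)$ over $k\Aut(x)$ and to see that a surjection of projectives inducing an isomorphism on tops is a cover. Your route buys explicitness and independence from Proposition~\ref{res} (which you only invoke as an optional shortcut), at the cost of the radical identity, which you state without proof; it is indeed valid for EI categories (the radical of $k\C$ is spanned by the non-isomorphisms together with the radicals of the groupoid part), but it deserves at least a sentence of justification since it is exactly where the EI hypothesis enters. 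One small imprecision: the vanishing of $P_{x_i,V_i}(x)$ for $x_i\not\simeq x$ is not a consequence of antisymmetry of the order alone; it follows because $M(x_i)\neq 0$ (as $S_{x_i,V_i}$ is a summand of $\operatorname{top}(M)$) while $M$-minimality of $x$ forces $M(z)=0$ for every $z\not\simeq x$ with $\C(z,x)\neq\emptyset$ --- both facts are present in your write-up, so this is a matter of citing the right reason rather than a gap.
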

\begin{proof}
By the characterization of the indecomposable projective modules and by definition of $\C_{M}$ it is clear that the support of $P_{M}$ is contained in $\C_{M}$.
Now let $x$ be an $M$-minimal object in $\C$. The full subcategory $\C_{\leq x}$ is an ideal in $\C$ and its intersection with $\C_M$ is just $x$. Now by Proposition \ref{res} the $k\Aut(x)$-module $P_M (x) $ is a projective module and admits a surjection onto $M(x)$. Thus, what remains to be shown is the minimality of $P_M (x)$. If $P_M(x)$ would not be the projective cover of $M(x)$, then by the universal property of the projective cover, there would exist projective modules $P_1$ and $P_2$ such that $P_M = P_1 \oplus P_2$ with $P_1(x)$ being the projective cover of $M(x)$ and $P_2(x) \neq 0$ such that, if $\pi: P_M \to M$ is the defining essential epimorphism, then $\pi \downarrow^{\C}_{\D} $ sends $P_2(x)$ to zero. The module $P_{2}$ has an indecomposable projective direct summand $P_{2}'$.  Now, since $P_{2}'(x) = k\Aut(x) \cdot e$ for some primitive idempotent $e \in k\Aut(x)$ and $P_{2}' = k\C \cdot e$ it follows that $\pi$ sends $P_{2}'$ to zero. This is a contradiction to the minimality of $P_M$.
\end{proof}
This Lemma gives us the following description of the minimal projective resolution of a $k\C$-module $M$.
\begin{Cor}\label{resolution}
Let $M$ be a $k\C$-module and $\P_M$ a minimal projective resolution. Then $\P_M$ is supported on $\C_M$ and for any $M$-minimal $x \in \Ob\C$ we have that $\P_M(x)$ is a minimal projective resolution of $M(x)$.
\end{Cor}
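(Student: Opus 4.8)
The plan is to induct on homological degree, feeding the preceding Lemma into each step and exploiting that the evaluation functors $N \mapsto N(x)$ are exact. Recall that, since $k\C$ is finite dimensional, a minimal projective resolution $\P_M\colon \cdots \to P_1 \to P_0 \to M \to 0$ is built by iterated projective covers: $P_0 = P_M$, and $P_{i+1}$ is the projective cover of the syzygy $\Omega^{i+1} M := \ker(P_i \to \Omega^i M)$, where $\Omega^0 M = M$. Throughout I would use that evaluation at an object is exact, hence commutes with kernels and images, and that $\C_M$ is closed upwards for the natural preorder on $\Ob\C$: if there is a morphism from an $M$-minimal object $x'$ to $y$ and a morphism from $y$ to $z$, composing gives a morphism $x' \to z$, so $z \in \C_M$.

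First I would show that $\P_M$ is supported on $\C_M$. Every $z$ with $M(z) \neq 0$ receives a morphism from some $M$-minimal object — descend in the finite preorder from $z$ through the support of $M$ — hence $M$ is supported on $\C_M$. By the preceding Lemma $P_0 = P_M$ is supported on $\C_M$, so its submodule $\Omega^1 M = \ker(P_0 \to M)$ is too. Inductively, if $\Omega^i M$ is supported on $\C_M$, then each $\Omega^i M$-minimal object lies in $\C_M$, so $\C_{\Omega^i M} \subseteq \C_M$ by upward closedness, whence $P_i$ — supported on $\C_{\Omega^i M}$ by the preceding Lemma — is supported on $\C_M$, and then so is $\Omega^{i+1} M \subseteq P_i$. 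This gives the first assertion.

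Now fix an $M$-minimal object $x$. Recall from the proof of the preceding Lemma that $\C_{\leq x}$ is an ideal with $\C_{\leq x} \cap \C_M = \{[x]\}$: if $z \leq x$ lies in $\C_M$, then $z$ receives a morphism from some $M$-minimal $x'$, and $M$-minimality of $x$ forces $x' \simeq x$ and hence $z \simeq x$. Combined with the first assertion this yields $(\Omega^i M)(z) = 0$ for all $z < x$ and all $i$. Consequently, for each $i$ exactly one of the following holds: either $(\Omega^i M)(x) = 0$, in which case $x \notin \C_{\Omega^i M}$ — otherwise $x$ would receive a morphism from an $\Omega^i M$-minimal object, which lies in $\C_M$ by the first assertion and is therefore $\simeq x$, contradicting $(\Omega^i M)(x)=0$ — so that $P_i(x) = 0$; or $(\Omega^i M)(x) \neq 0$, in which case $x$ is an $\Omega^i M$-minimal object and the preceding Lemma shows that $P_i(x)$ is the projective cover of $(\Omega^i M)(x)$ as a $k\Aut(x)$-module. (For $i = 0$ this is just the preceding Lemma applied to $M$ itself.)

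Finally I would apply the exact functor ``evaluate at $x$'' to $\P_M \to M \to 0$, getting an exact complex $\cdots \to P_1(x) \to P_0(x) \to M(x) \to 0$ whose terms are projective $k\Aut(x)$-modules by the previous paragraph. Since evaluation commutes with images, its $i$-th syzygy (with $M(x)$ the $0$-th) equals $(\Omega^i M)(x)$, and by the previous paragraph the comparison map onto it is a projective cover in every degree, the case where both sides vanish being trivial. Hence $\P_M(x)$ is a minimal projective resolution of $M(x)$, which is the remaining assertion. The step I expect to be the main obstacle is the persistence claim in the third paragraph — that $M$-minimality of $x$ is inherited by every syzygy, or else replaced by vanishing at $x$ — and it is precisely there that both the support statement $\P_M \subseteq \C_M$ and the upward closedness of $\C_M$ are needed; the rest is bookkeeping with exact functors.
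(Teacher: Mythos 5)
Your proof is correct and is essentially the argument the paper intends: the Corollary is stated there without proof as an immediate consequence of the preceding Lemma, and your induction on syzygies is exactly the verification being left to the reader. The details you supply---exactness of evaluation at $x$, upward closedness of $\C_M$ forcing each syzygy to stay supported on $\C_M$, and the resulting dichotomy that at an $M$-minimal $x$ each syzygy either vanishes or again has $x$ as a minimal object so the Lemma applies---are precisely the right ones, with no gaps.
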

With these preparations we are now in the position to prove the Main Theorem.
\begin{proof}[Proof of the Main Theorem]
We will only prove the assertion concerning the finitistic dimension. A proof for the assertion on the global dimension can be found in \cite{Fei1}.

Let $M$ be a $k\C$-module which is of finite projective dimension and consider a minimal projective resolution
\[\P_M: \quad 0 \to P_n \to P_{n-1} \to \dots \to P_1 \to P_0 \to M \to 0. \]
Then $\P_M$ is supported on $\C_M$ as we have seen above. Now let $x$ be an $M$-minimal object in $\C$. By Corollary~\ref{resolution} $\P_M(x)$ is a minimal projective resolution of $M(x)$ as $k\Aut(x)$-module. As a module over a group algebra of a finite group, the module $M(x)$ is either projective or of infinite projective dimension. The latter case is impossible, since $\P_M$ is a finite resolution. This implies, that $P_1(x) = 0$ and $P_1$ is supported on $\C_M \setminus \lbrace M\text{-minimal objects}\rbrace $. Applying this argument inductively to any of the $P_i$, we get that $n \leq \ell(\C)$ and hence the claim.
\end{proof}
Finally we will present some examples to illustrate the main result.
\begin{exm}
\begin{itemize}
\item[(1)] Let $\C$ be an EI category with only one object. Then $k\C$ is the group algebra of a finite group $G$. For this case it is well known that the finitistic dimension is zero and $\ell(\C)$ equals zero as well.
\item[(2)] Suppose that $\C$ is the path category of a finite quiver without oriented cycles, then $k\C$ is hereditary and therefore $\findim (k\C)  = \gldim (k\C) \leq 1$. Thus, in this case the given bound is not optimal.
\item[(3)] Let \[ \C: \begin{xy}\xymatrix{ X \ar@(ul,dl)_{g} \ar[r]^{f} & Y \ar@(ur,dr)^{1_Y} } \end{xy}\] 
be the EI category given by the relations $g^2 = 1_X$ and $fg = f$. Further suppose that $k$ is a field of characteristic $2$. Then the indecomposable projective representations of $\C$ are exactly
 \[ P_X: \begin{xy}\xymatrix{ k^2 \ar@(ul,dl)_{A} \ar[r]^{(1\; 1)} & k \ar@(ur,dr)^{(1)} } \end{xy} \mbox{ where } A = \left(\begin{smallmatrix} 0 & 1 \\ 1 & 0 \end{smallmatrix}\right)\] and
  \[ P_Y: \begin{xy}\xymatrix{ 0 \ar@(ul,dl)_{(0)} \ar[r]^{(0)} & k \ar@(ur,dr)^{(1)} } \end{xy}.\] 
Now it is clear that this algebra is of infinite global dimension since the group algebra $k\Aut(x)$ is not semisimple. Its finitistic dimension equals $1$ since it is easy to see, that the representation
 \[ M: \begin{xy}\xymatrix{ k^2 \ar@(ul,dl)_{A} \ar[r]^{(0)} & 0 \ar@(ur,dr)^{(0)} } \end{xy} \text{ with } A = \left(\begin{smallmatrix} 0 & 1 \\ 1 & 0 \end{smallmatrix}\right) \] 
 has projective dimension $1$ and there is no module with projective dimension greater than one (if the projective dimension is finite) by our theorem.
\end{itemize}
\end{exm}
 \bigskip

\addcontentsline{toc}{section}{References}
\nocite{*}
\bibliographystyle{plain}
\bibliography{Refs_note}

\begin{thebibliography}{1}

\bibitem{Auslander}
Maurice Auslander.
\newblock Representation theory of {A}rtin algebras. {I}, {II}.
\newblock {\em Comm. Algebra}, 1:177--268; ibid. 1 (1974), 269--310, 1974.

\bibitem{Grodal}
Jesper Grodal and Jeffrey~H. Smith.
\newblock {\em Algebraic models for finite G-spaces}.
\newblock Oberwolfach report, 2007.

\bibitem{IT}
Kiyoshi Igusa and Gordana Todorov.
\newblock On the finitistic global dimension conjecture for {A}rtin algebras.
\newblock In {\em Representations of algebras and related topics}, volume~45 of
  {\em Fields Inst. Commun.}, pages 201--204. Amer. Math. Soc., Providence, RI,
  2005.

\bibitem{Lueck}
Wolfgang L{\"u}ck.
\newblock {\em Transformation groups and algebraic {$K$}-theory}, volume 1408
  of {\em Lecture Notes in Mathematics}.
\newblock Springer-Verlag, Berlin, 1989.
\newblock Mathematica Gottingensis.

\bibitem{Webb}
Peter Webb.
\newblock Standard stratifications of {EI} categories and {A}lperin's weight
  conjecture.
\newblock {\em J. Algebra}, 320(12):4073--4091, 2008.

\bibitem{Fei1}
Fei Xu.
\newblock Representations of categories and their applications.
\newblock {\em J. Algebra}, 317(1):153--183, 2007.

\bibitem{Birge}
Birge Zimmermann~Huisgen.
\newblock The finitistic dimension conjectures---a tale of {$3.5$} decades.
\newblock In {\em Abelian groups and modules ({P}adova, 1994)}, volume 343 of
  {\em Math. Appl.}, pages 501--517. Kluwer Acad. Publ., Dordrecht, 1995.

\end{thebibliography}

\end{document}